    \let\usingAmsArtXII\usepackage      
  \def \useHugeSize {}
  \def \numberingIsThrough {}
    \def\mathbb{\Bbb}
    \def\mathfrak{\frak}
    \def\mathbf{\bold}
      \def\boldsymbol#1{{\bold #1}}
    \def\mathbit{\boldsymbol}
    \newenvironment{proof}{%
         \@ifnextchar[{%
                       \expandafter\let\expandafter\end@proof
                         \csname endpf*\endcsname
                         \my@proof
                      }{\let\end@proof\endpf\pf}%
        }{\end@proof}
    \def\my@proof[#1]{\@nameuse{pf*}{#1}}
    \def\xrightarrow[#1]#2{@>{#2}>{#1}>}
    \def\xleftarrow[#1]#2{@<{#2}<{#1}<}
    \def\providecommand#1{\def#1}
    \def\emph#1{{\em #1}}
    \def\textbf#1{{\bf #1}}
    \def\mathring{\overset{\,\,{}_\circ}}
        \let\usingAmsArtXII\usepackage
        \DeclareMathAccent{\mathring}{\mathalpha}{operators}{"17}
      \long\def\FAKEendPROOF{\endtrivlist}
          \def\endproof{\qed\endtrivlist}
        \DeclareMathAlphabet{\mathbit}{OML}{cmm}{b}{it}
      \def\Sb#1\endSb{_{\substack{#1}}}
      \def\Sp#1\endSp{^{\substack{#1}}}
                \def\mathcal{\cal}
                \def\pcyr{%
                        \def\default@family{UWCyr}%
                        \let\oldSl@\sl
                        \def\sl{\def\default@shape{it}\oldSl@}%
                        \cyracc
                        \language\Russian\family{UWCyr}\selectfont
                }
                \DeclareFontFamily{OT2}{cmr}{\hyphenchar\font45 }
                \DeclareFontShape{OT2}{cmr}{m}{n}{%
                     <5><6><7><8><9><10>gen*wncyr %
                     <10.95><12><14.4><17.28><20.74><24.88> wncyr10 %
                }{}
                \DeclareFontShape{OT2}{cmr}{m}{it}{%
                     <5><6><7><8><9><10> gen * wncyi%
                     <10.95><12><14.4><17.28><20.74><24.88> wncyi10%
                }{}
                \DeclareFontShape{OT2}{cmr}{bx}{n}{%
                     <5><6><7><8><9><10> gen * wncyb%
                     <10.95><12><14.4><17.28><20.74><24.88> wncyb10%
                }{}
                \DeclareFontShape{OT2}{cmr}{m}{sl}{%
                     <-> ssub * cmr/m/it%
                }{}
                \DeclareFontShape{OT2}{cmr}{m}{sc}{%
                     <5><6><7><8><9><10>%
                     <10.95><12><14.4><17.28><20.74><24.88> wncysc10%
                }{}
                \DeclareFontFamily{OT2}{cmss}{\hyphenchar\font45 }
                \DeclareFontShape{OT2}{cmss}{m}{n}{%
                     <8><9><10> gen * wncyss%
                     <10.95><12><14.4><17.28><20.74><24.88> wncyss10%
                }{}
                \def\cyrencodingdefault{OT2}
                \def\pcyr{%
                        \cyracc
                        \let\encodingdefault\cyrencodingdefault
                        \language\Russian\fontencoding{OT2}\selectfont
                }
        \def\theorembodyfont#1{\relax}
          \let\@@th@plain\th@plain
          \def\th@plain{ \@@th@plain \slshape }
        \let\normalshape\relax
     \def\cprime{$'$}
  \def\@sect@my#1#2#3#4#5#6[#7]#8{%
\ifnum #2>\c@secnumdepth
   \let\@svsec\@empty
 \else
   \refstepcounter{#1}%
\edef\@svsec{\ifnum#2<\@m
             \@ifundefined{#1name}{}{\csname #1name\endcsname\ }\fi
\noexpand\rom{\csname the#1\endcsname.}\enspace}\fi
 \@tempskipa #5\relax
 \ifdim \@tempskipa>\z@ 
   \begingroup #6\relax
   \@hangfrom{\hskip #3\relax\@svsec}{\interlinepenalty\@M #8\par}%
   \endgroup
   \if@article\else\csname #1mark\endcsname{%
        \ifnum \c@secnumdepth >#2\relax\csname the#1\endcsname. \fi#7}\fi
\ifnum#2>\@m \else
       \let\@tempf\\ \def\\{\protect\\}\addcontentsline{toc}{#1}%
{\ifnum #2>\c@secnumdepth \else
             \protect\numberline{%
               \ifnum#2<\@m
               \@ifundefined{#1name}{}{\csname #1name\endcsname\ }\fi
               \csname the#1\endcsname.}\fi
           #8}\let\\\@tempf
     \fi
 \else
  \def\@svsechd{#6\hskip #3\@svsec
    \@ifnotempty{#8}{\ignorespaces#8\unskip
       \ifnum\spacefactor<1001.\fi}%
        \ifnum#2>\@m \else
          \let\@tempf\\ \def\\{\protect\\}\addcontentsline{toc}{#1}%
            {\ifnum #2>\c@secnumdepth \else
              \protect\numberline{%
                \ifnum#2<\@m
                \@ifundefined{#1name}{}{\csname #1name\endcsname\ }\fi
                \csname the#1\endcsname.}\fi
             #8}\let\\\@tempf\fi}%
 \fi
\@xsect{#5}}
  \let\@sect\@sect@my             
  \def\th@remark@my{\theorempreskipamount6\p@\@plus6\p@
    \theorempostskipamount\theorempreskipamount
    \def\theorem@headerfont{\it}\normalshape}
    \let\th@remark\th@remark@my
    \let\o@@remark\th@remark
      \def\th@remark{\o@@remark
        \ifdim\theorempostskipamount < 2pt\relax
          \theorempostskipamount\theorempreskipamount
             \multiply\theorempostskipamount\tw@
             \divide\theorempostskipamount\thr@@
        \fi
      }
\let\myLabel\@gobble
\def\labelsONmargin{\@mparswitchfalse\def\myLabel##1{\@bsphack\marginpar
                                  {\normalshape\tiny\rm Label ##1}\@esphack}}
  \def\url#1{{\tt #1}}%
\def\PREpmodSKIP{\allowbreak  \if@display\mkern18mu\else\mkern8mu\fi}
\def\cyracc{\def\u##1{
                \if \i##1\char"1A%
                \else \if I##1\char"12%
                \else \accent"24 ##1\fi\fi }%
\def\"##1{\if e##1{\char"1B}%
                \else \if E##1{\char"13}%
                \else \accent"7F ##1\fi\fi }%
\def\9##1{\if##1z\char"19 
\else\if##1Z\char"11 
\else\if##1E\char"03 
\else\if##1e\char"0B 
\else\if##1u\char"18 
\else\if##1U\char"10 
\else\if##1A\char"17 
\else\if##1a\char"1F 
\else\if##1p\char"7E 
\else\if##1P\char"5E 
\else\if##1Q\char"5F 
\else\if##1q\char"7F 
\else\if##1i\char"1A 
\else\if##1I\char"12 
\else\if##1N\char"7D 
\fi
\fi
\fi
\fi
\fi
\fi
\fi
\fi
\fi
\fi
\fi
\fi
\fi
\fi
\fi
}%
\def\cydot{{\kern0pt}}}%
\def\cydot{$\cdot$}
        \def\Russian{0\relax
    \message{Don't know the hyphenation rules for Russian^^J
                        Please do INITeX with `input  russhyph' in the 
                        command line}%
                \gdef\Russian{0\relax}%
        }
  \def\@putname#1#2#3#4{\def\@@ref{#3}\let\old@bf\bf
        \def\bf##1{\old@bf\if?\noexpand##1?{#4}\else##1\fi}%
        #1{#2}%
        \let\bf\old@bf}
  \def\@putname#1#2#3#4{\def\@@ref{#3}\let\old@bf\bf    
        \let\old@reset@font\reset@font                  
        \def\bf##1{\old@bf\if?\noexpand##1?{#4}\else##1\fi}%
        \def\reset@font##1##2{\old@reset@font##1\if?\noexpand##2?{#4}\else##2\fi}#1{#2}%
        \let\bf\old@bf\let\reset@font\old@reset@font}
\let\my@ref=\ref
\def\ref#1{\@putname\my@ref{#1}{#1}{\tiny\rm\@@ref}}
\let\my@pageref=\pageref
\def\pageref#1{\@putname\my@pageref{#1}{#1}{\tiny\rm\@@ref}}
\let\my@cite=\cite
\def\cite#1{\@putname\my@cite{#1}{\@citeb}{\tiny\rm\@@ref}}
  \theoremstyle{plain} 
\address \undefined
\institute \undefined \else       
     \def\address{\institute}
\email \undefined
        \let\email\texttt
\let\emphOrig\emph
  \def\eatToBar#1|{}
  \def\emphToIndexSLASH#1\/{\index{#1}\eatToBar}
  \def\emphToIndexDOTSLASH#1.\/{\emphToIndexSLASH #1\/}
  \def\emphAndIndex#1{\emphOrig{#1}{\emphToIndexDOTSLASH #1.\/|}}
  \let\emph\emphAndIndex
\numberwithin{equation}{section}
\theoremstyle{definition}
\newtheorem{definition}{Definition}[section]
\theoremstyle{remark}
\newtheorem{note}{Note}[section] 
\newtheorem{summary}{Summary}[section] 
\theoremstyle{plain} 
\newtheorem*{theor*}{Theorem}
\newcommand{\fg}{\mathfrak{g}}
\newcommand{\fk}{\mathfrak{k}}
\newcommand{\fn}{\mathfrak{n}}
\newcommand{\fb}{\mathfrak{b}}
\newcommand{\fm}{\mathfrak{m}}
\newcommand{\fh}{\mathfrak{h}}
\begin{document}
\bibliographystyle{amsplain}

\ifx\useHugeSize\undefined
\else
\Huge
\fi

\relax

\author[Ivan Penkov]{\;Ivan Penkov}

\address{
Ivan Penkov
\newline Jacobs University Bremen
\newline Campus Ring 1
\newline 28759 Bremen, Germany}
\email{i.penkov@jacobs-university.de}

\author[Gregg Zuckerman]{\;Gregg Zuckerman}

\address{
Gregg Zuckerman
\newline Department of Mathematics
\newline Yale University
\newline 10 Hillhouse Avenue, P.O. Box 208283
\newline New Haven, CT 06520-8283, USA}
\email{gregg.zuckerman@yale.edu}

\title{On the existence of infinite-dimensional generalized Harish-Chandra modules}

\date{ \today }

\maketitle

\begin{center}
\it To our friend Joe
\end{center}

\begin{abstract} We prove a general existence result for infinite-dimensional admissible $(\fg,\fk)$-modules, where $\fg$ is a reductive finite-dimensional complex Lie algebra and $\fk$ is a reductive in $\fg$ algebraic subalgebra.
\end{abstract}

\medskip\noindent {\footnotesize 2010 AMS Subject classification: Primary 17B10, 17B65} \\
\noindent {\footnotesize Keywords: generalized Harish-Chandra module, admissible $(\fg,\fk)$-module}

\medskip

In this note, we draw a corollary of our earlier work \cite{PZ1}. In the subsequent works \cite{PZ2}, \cite{PZ3}, \cite{PZ4}, \cite{PSZ} we have built foundations of an algebraic theory of generalized Harish-Chandra modules.

The base field is $\mathbb{C}$. Let $\fg$ be a finite-dimensional (complex) reductive Lie algebra and let $\fk \in \fg$ be a reductive in $\fg$ algebraic subalgebra. A ($\fg, \fk$)-\emph{module} $M$ is a $\fg$-module $M$ on which $\fk$ acts locally finitely, i.e. dim ($U(\fk)\cdot m) < \infty$ for any $m\in M$. Under the assumption that $M$ is a simple $\fg$-module, the requirement that $M$ be a ($\fg,\fk$)-module is equivalent to the requirement that as a $\fk$-module $M$ decomposes into a direct sum of simple finite-dimensional $\fk$-modules. An \emph{admissible}  ($\fg,\fk$)-module $M$ is a  ($\fg,\fk$)-module which, after restriction to $\fk$, is isomorphic to a direct sum of simple finite-dimensional $\fk$-modules with finite multiplicities.

Both of these notions go back to the 1960's. By a \emph{generalized Harish-Chandra module}, we understand a $\fg$-module $M$ for which there exists a reductive subalgebra $\fk$ of $\fg$ such that $M$ is an admissible  ($\fg,\fk$)-module. The case of \emph{Harish-Chandra modules} corresponds to the case where  $\fk$ is a symmetric subalgebra of $\fg$. Under this latter assumption, there is an extensive literature on $(\fg,\fk)$-modules, and here we just direct the reader to \cite{V} and \cite{KV} and references therein.

The question of interest in the present note is the following:

What is a necessary and sufficient condition on an algebraic, reductive in $\fg$ subalgebra $\fk$ for the existence of a simple infinite-dimensional admissible  ($\fg,\fk$)-module?

We know of no published answer to this question. However, we have observed that the answer is actually implicit in our work. To make it explicit, we  prove the following.

\begin{theor*}
For an algebraic reductive in $\fg$ subalgebra $\fk$, there exists a simple infinite-dimensional admissible  ($\fg,\fk$)-module if and only if $\fk$ is not an ideal of $\fg$. 
\end{theor*}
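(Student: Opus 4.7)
The plan is to prove the two directions separately.

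For the necessity ($\fk$ an ideal $\Rightarrow$ no simple infinite-dimensional admissible $(\fg,\fk)$-module), I will use the fact that a reductive $\fg$ with a reductive-in-$\fg$ ideal $\fk$ decomposes as a direct sum of ideals $\fg=\fk\oplus\fm$ with $[\fk,\fm]=0$. Given a simple $(\fg,\fk)$-module $M$, the $\fk$-isotypic components $M_\lambda$ are stable under $\fm$ (since $\fm$ commutes with $\fk$) and hence under $\fg$, so simplicity forces $M=M_\lambda$ for a unique $\lambda$. Then the standard isomorphism $M\cong V_\lambda\otimes\Hom_\fk(V_\lambda,M)$ realizes $M$ as an outer tensor product of the simple finite-dimensional $\fk$-module $V_\lambda$ and a simple $\fm$-module $N$. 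Admissibility says $\dim N<\infty$, so $\dim M=\dim V_\lambda\cdot\dim N<\infty$. This direction requires no appeal to \cite{PZ1}.

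For the sufficiency ($\fk$ not an ideal $\Rightarrow$ existence), the strategy is to invoke the fundamental series construction from \cite{PZ1}. First I would observe the elementary reformulation of the hypothesis: if $\fg=\fk\oplus\fq$ is any $\fk$-stable decomposition (available by reductivity of $\fk$ in $\fg$), then $\fk$ fails to be an ideal precisely when $\fq$ is a nontrivial $\fk$-module, equivalently when a Cartan subalgebra $\fh\subset\fk$ admits a nonzero weight on $\fq\subset\fg$. This nonvanishing is exactly what is needed to produce a $\fk$-compatible parabolic $\fp=\fl+\fn\subsetneq\fg$ of the type used in \cite{PZ1}: one takes a suitably generic weight on $\fh$ and forms the parabolic from its positive-weight directions in $\fg$. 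Then I would cite the main construction of \cite{PZ1}, which, applied to a sufficiently dominant finite-dimensional $\fp$-module, produces an admissible $(\fg,\fk)$-module which is simple in the generic range. Since $\fp\ne\fg$, the cohomologically induced module is infinite-dimensional.

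The main obstacle, and the only place where care is required, is the correct translation step in the sufficiency direction: matching the negation of the ideal condition with the precise genericity/compatibility hypotheses of \cite{PZ1} so that the construction is actually nontrivial, and confirming that properness of $\fp$ in $\fg$ forces the output to be infinite-dimensional (as opposed to merely nonzero). Once this translation is in place, the existence statement is essentially a direct citation, which is why the authors describe the result as ``implicit'' in their earlier work.
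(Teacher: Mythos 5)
Your necessity direction is correct and is essentially the paper's own argument: decompose $\fg=\fk\oplus\fg'$ into commuting ideals, note that a $\fk$-isotypic component of a simple $(\fg,\fk)$-module is $\fg$-stable, write $M\simeq M_{\fk}\otimes\Hom_{\fk}(M_{\fk},M)$, and conclude from admissibility that $M$ is finite-dimensional.

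The sufficiency direction has a genuine gap at exactly the point you yourself flag as ``the only place where care is required.'' You assert that ``since $\fp\neq\fg$, the cohomologically induced module is infinite-dimensional,'' but this inference is not valid and is not how the paper concludes. Cohomological induction from a proper parabolic can perfectly well return finite-dimensional modules (Bott--Borel--Weil is the prototype), and moreover what must be shown to be infinite-dimensional is not $F^s(\fp,E)$ but its unique simple submodule $\bar{F}^s(\fp,E)$. The quantitative consequence of ``$\fk$ is not an ideal'' that the paper actually uses is not $\fp\subsetneq\fg$ (i.e.\ $\fn\neq 0$) but $r:=\dim(\fn\cap\fk^{\perp})>0$; indeed, if $\fk$ were a proper nonabelian ideal one would still have $\fn\neq 0$ (with $\fn\subset\fk$) while $r=0$, so properness of $\fp$ alone cannot be the operative hypothesis. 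The paper's argument runs as follows: if $\nu$ can be chosen so that the central character of $\bar{F}^s(\fp,E)$ is non-integral, infinite-dimensionality is automatic; otherwise take $\nu$ integral $\fb$-dominant, so that the unique simple finite-dimensional module with the relevant central character is $W$ of highest weight $\nu$, and then compute
$$\Hom_{\fg}(W,\bar{F}^s(\fp,E))\simeq\Hom_{\fm}(H^r(\fn,W),E)$$
via Proposition 6 of \cite{PZ1}. This space vanishes by Kostant's theorem, since $H^r(\fn,W)$ is a direct sum of modules $E(w(\nu+\tilde\rho)-\tilde\rho)$ over Weyl group elements $w$ of length $r>0$, none of which is isomorphic to $E$. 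Without this step (or some substitute for it), your construction only yields a nonzero simple admissible $(\fg,\fk)$-module, which could a priori be finite-dimensional.
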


\begin{proof}
If $\fk$ is an ideal of $\fg$, then any simple  ($\fg,\fk$)-module $M$ is isomorphic to an outer tensor product $M_{\fk}\boxtimes M'$, where $M_{\fk}$ is a simple finite-dimensional $\fk$-module and $M'$ is a simple module over a direct complement $\fg'$ of $\fk$ in $\fg$. Indeed, fix a simple finite-dimensional $\fk$-submodule $M_{\fk}$ of $M$ (which exists because of the locally finite action of $\fk$ on $M$). Then the isotypic component of $M_{\fk}$ in $M$ is  a $\fg$-submodule since $\fk$ is an ideal in $\fg$. Therefore,
$$M=M_{\fk}\otimes\textup{Hom}_{\fk}(M_{\fk},M).$$
Setting $M':=\textup{Hom}_{\fk}(M_{\fk},M)$, we see that the simplicity of $M$ as a $\fg$-module implies the simplicity of $M'$ as a $\fg'$-module.
 Consequently, if $M$ is admissible then $M$ is finite dimensional.

Assume now that $\fk$ is not an ideal in $\fg$. Without loss of generality we can assume that $\fg$ is semisimple and that $\fk$ does not contain an ideal of $\fg$. We have $\fg=\fk\oplus\fk^{\perp}$ where $^\perp$ indicates orthogonal space with respect to the Killing form $\langle\cdot,\cdot\rangle$ on $\fg$. We fix a Cartan subalgebra $\mathfrak{t}$ of $\fk$ and an element $h\in \mathfrak{t}$ which is regular in $\fk$ and has real eigenvalues in  $\fg$. By $\fg^{\alpha}$ we denote the eigenspaces of $h$ in $\fg$.  Then 
$$ \mathfrak{p}:=C_{\fg}(h)\oplus(\bigoplus_{\alpha>0}\fg^{\alpha})$$
is a \emph{minimal} $\mathfrak{t}$-\emph{compatible parabolic subalgebra} of $\fg$. Here $C_{\fg}(h)$ is the centralizer of $h$ in $\fg$. The notions of $\mathfrak{t}$-compatible and minimal $\mathfrak{t}$-compatible parabolic subalgebra are discussed in \cite{PZ1}. In what follows, we set $\mathfrak{m}:=C_{\fg}(h)$ and $\fn:=\bigoplus_{\alpha>0}\fg^\alpha$ and note that in the semidirect sum $\mathfrak{p}=\mathfrak{m}\oplus\fn$, $\mathfrak{m}$ is the reductive part of $\mathfrak{p}$ and $\fn$ is the nilradical of $\mathfrak{p}$. Furthermore, $\fk^{\perp}=(\fn\cap\fk^{\perp})\oplus(\mathfrak{m}\cap\fk^{\perp})\oplus(\bar{\fn}\cap\fk^{\perp})$ where $\bar{\fn}:=\bigoplus_{\alpha<0}\fg^{\alpha}$. The assumption that $\fk$ does not contain an ideal of $\fg$ implies that $h$ does not commute with $\fk^{\perp}$, and hence
$$\fn\cap\fk^{\perp}\neq0.$$
In particular,
$$r:=\textup{dim}(\fn\cap\fk^{\perp})>0.$$

Fix a Borel subalgebra $\fb$ of $\fg$ such that $\fb\subset\mathfrak{p}$ and $\fb\cap\fk$ is a Borel subalgebra of $\fk$. Recall from \cite{PZ1} the construction of the fundamental series ($\fg,\fk$)-module $F^s(\mathfrak{p}, E)$ where $E$ is a finite-dimensional simple $\mathfrak{p}$-module which, as an $\mathfrak{m}$-module, has highest weight $\nu$ with respect to the Borel subalgebra $\fb_{\mathfrak{m}}=\fb\cap\mathfrak{m}$ of $\mathfrak{m}$. Note that rk$\mathfrak{m}$$=$rk$\mathfrak{g}$, and assume that $\nu\in\fh^*$ for a fixed Cartan subalgebra $\fh$ of $\fg$ lying in $\mathfrak{m}$ and containing $\mathfrak{t}$. Let $\omega\in \mathfrak{t}^*$ be the restriction of $\nu$ to $\mathfrak{t}$. By $\mu$ we denote the $\mathfrak{t}$-weight $\omega+2\rho^{\perp}_{\fn}$, where $\rho^{\perp}_{\fn}$ is the half-sum of the $\mathfrak{t}$-weights of $\fn\cap\mathfrak{k}^{\perp}$ with multiplicities, that is, the half-sum of the multiset of $\mathfrak{t}$-weights of $\fn\cap\mathfrak{k}^{\perp}$.

In what follows, we assume that $\mu$ is an integral weight of $\mathfrak{k}$, dominant with respect to the Borel subalgebra $\fb\cap\mathfrak{k}$ of $\fk$. We need one further assumption on $\mu$.

Following \cite{PZ1}, we call $\mu$ \emph{generic} if the following two conditions are satisfied:

\begin{enumerate}
\item $\langle \textup{Re}\mu +2\rho-\rho_{\fn}, \beta\rangle\geq 0$ for every $\mathfrak{t}$-weight $\beta$ of $\fn\cap\mathfrak{k}$,
\item $\langle \textup{Re}\mu +2\rho-\rho_{S}, \rho_{S}\rangle>0$ for every submultiset of the multiset $S$ of $\mathfrak{t}$-weights of $\fn$,
\end{enumerate}
where $\rho$ is the half-sum of the $\mathfrak{t}$-roots of $\mathfrak{k}$ and $\rho_{\fn}$ is the half-sum of the multiset of $\mathfrak{t}$-weights of $\fn$.

Theorem 2 of \cite{PZ1} implies that, under the additional assumption of genericity of $\mu$ (which is ultimately a condition on $\nu$), the ($\fg,\fk$)-module $F^s(\mathfrak{p},E)$ is a nonzero admissible ($\fg,\fk$)-module with a unique simple submodule $\bar{F}^s(\mathfrak{p},E)$. Here $s=\textup{dim}(\fk\cap\fn)$. Moreover,  Proposition 6 in \cite{PZ1} claims  that there is an isomorphism of vector spaces

$$\textup{Hom}_\fg(M,\bar{F}^s(\mathfrak{p},E))\simeq\textup{Hom}_{\mathfrak{m}}(H^r(\fn,M),E),$$
for any simple admissible ($\fg,\fk$)-module $M$. We know that  	 $\bar{F}^s(\mathfrak{p},E)$ is a simple admissible  ($\fg,\fk$)-module, so it only remains to show the existence of a weight $\nu$ which satisfies all above assumptions and such that dim$\bar{F}^s(\mathfrak{p},E)=\infty$.

We consider two possibilities. Either there exists a $\nu$ as above such that the central character of the ($\fg,\fk$)-module $\bar{F}^s(\mathfrak{p},E)$ is not integral, or the central character of $\bar{F}^s(\mathfrak{p},E)$  is necessarily integral (as a consequence of all our assumptions on $\nu$). In the former case, we are done as then necessarily dim$\bar{F}^s(\mathfrak{p},E)=\infty$. In the latter case we will further assume that $\nu$ is integral $\fb$-dominant for $\fg$. Lemma 2.3 in \cite{PZ3} shows that this additional assumption is compatible with all previous  assumptions on $\nu$. Then, by Theorem $2\,\textup{c})$ in \cite{PZ1},  the simple finite-dimensional $W$ with $\fb$-highest weight $\nu$ is the only (up to isomorphism) simple finite-dimensional module whose central character coincides with that of $\bar{F}^s(\mathfrak{p},E)$.

Therefore it suffices to show that
$$\textup{Hom}_\fg(W,\bar{F}^s(\mathfrak{p},E))=\textup{Hom}_{\mathfrak{m}}(H^r(\fn,W),E)=0.$$
  For this, recall that Kostant's Theorem \cite{K} asserts that there is an  isomorphism of $\fm$-modules
$$H^r(\fn,W)\simeq\bigoplus_w E(w(\nu+\tilde\rho)-\tilde\rho).$$
Here $\tilde\rho$ is the half-sum of roots of $\fb$, $E(\gamma)$ is a simple $\fm$-module with highest weight $\gamma$, and the sum is taken over all elements $w$ of the Weyl group of $\fg$ of length $r$ for which the weights $w(\nu+\tilde\rho)-\tilde\rho$ are $\fb_{\fm}$-dominant. Since $r>0$, we infer that 
$$\textup{Hom}_{\mathfrak{m}}(H^r(\fn,W),E)=0,$$
and the theorem is proved.
\end{proof}

In conclusion, we would like to make two brief comments on how the modules, whose existence is claimed in the above theorem, fit into the panorama of well-studied (and not so well-studied) $\fg$-modules. Our first remark is that  $\bar{F}^s(\mathfrak{p},E)$ does not have to be a ($\fg,\fk'$)-module for any reductive in $\fg$ subalgebra $\fk'$ which contains $\fk$ properly. Indeed, let $\fg=\mathfrak{sl}(n)$ for $n\geq4$ and let $\fk$ be a principal $\mathfrak{sl}(2)$-subalgebra. By the same argument as in our expository paper \cite{PZ0}, using  the work of Willenbring and the second author \cite{WZ}, one can show that the $\fg$-module $\bar{F}^1(\mathfrak{p},E)$ (here $s=1$) is not a ($\fg,\fk'$)-module for any $\fk'$ as above. In particular, $\bar{F}^1(\mathfrak{p},E)$ is not a Harish-Chandra module for the pair ($\fg,\mathfrak{so}(n)$) if $n=2k+1$, or the pair ($\fg,\mathfrak{sp}(n)$) if $n=2k$.

Our second comment is that $\fk$ does not have to be a symmetric subalgebra of $\fg$ for the module $\bar{F}^s(\mathfrak{p},E)$ to be  a Harish-Chandra module. For instance, let $\fg$ be simple and $\fk$ be an ideal in a symmetric subalgebra $\fk'$ of $\fg$. Then $\bar{F}^s(\mathfrak{p},E)$ is an admissible $(\fg,\fk)$-module which is also a ($\fg,\fk'$)-module,  hence a Harish-Chandra module. The property of  a Harish-Chandra module to be admissible over an ideal of the relevant symmetric subalgebra has been studied in the literature. This applies in particular to the work of Orsted and Wolf \cite{OW}, where certain ideals of symmetric subalgebras are singled out and discrete series modules -$ $-admissible over these ideals-$ $- are investigated. Our approach in \cite{PZ1} provides an alternative construction which applies to any ideal of a symmetric subalgebra but, even in the case of Orsted and Wolf, the range of Harish-Chandra modules arising through this construction  requires further study.

{\bf Acknowledgment.} We acknowledge the hospitality of the American Institute of Mathematics in San Jose where this paper was conceived during a SQuaRE meeting. IP has been supported in part by DFG grant PE 980/6-1.

\end{document}